\def\Hom{\mathop{\rm Hom}\nolimits}
\def\Ext{\mathop{\rm Ext}\nolimits}
\def\Fac{\mathop{\rm Fac}\nolimits}
\def\mod{\mathop{\rm mod}\nolimits}
\def\End{\mathop{\rm End}\nolimits}
\def\tilt{\mathop{\rm tilt}\nolimits}
\def\ind{\mathop{\rm ind}\nolimits}
\def\tilt{\mathop{\rm \tau\makebox{-}tilt}\nolimits}
\def\stilt{\mathop{\rm s\tau\makebox{-}tilt}\nolimits}
\def\rad{\mathop{\rm rad}\nolimits}
\def\sbrick{\mathop{\rm sbrick}\nolimits}
\def\sf{\mathop{\rm sf}\nolimits}
\def\f{\mathop{\rm f}\nolimits}
\begin{document}

\newcommand{\nc}{\newcommand}



\newtheorem{theorem}{Theorem}[section]
\newtheorem{proposition}[theorem]{Proposition}
\newtheorem{lemma}[theorem]{Lemma}
\newtheorem{corollary}[theorem]{Corollary}
\newtheorem{conjecture}[theorem]{Conjecture}
\newtheorem{question}[theorem]{Question}
\newtheorem{definition}[theorem]{Definition}
\newtheorem{example}[theorem]{Example}

\newtheorem{remark}[theorem]{Remark}
\def\Pf#1{{\noindent\bf Proof}.\setcounter{equation}{0}}
\def\>#1{{ $\Rightarrow$ }\setcounter{equation}{0}}
\def\<>#1{{ $\Leftrightarrow$ }\setcounter{equation}{0}}
\def\bskip#1{{ \vskip 20pt }\setcounter{equation}{0}}
\def\sskip#1{{ \vskip 5pt }\setcounter{equation}{0}}
\def\bg#1{\begin{#1}\setcounter{equation}{0}}
\def\ed#1{\end{#1}\setcounter{equation}{0}}
\def\KET{T^{^F\bot}\setcounter{equation}{0}}
\def\KEC{C^{\bot}\setcounter{equation}{0}}

\renewcommand{\thefootnote}{\fnsymbol{footnote}}
\setcounter{footnote}{0}
%
%


\title{\bf Support  $\tau$-tilting modules over  one-point extensions
\thanks{This work was partially supported by NSFC (Grant No. 11971225). } }
\footnotetext{
E-mail:~hpgao07@163.com, xiezongzhen3@163.com}
\smallskip
\author{\small Hanpeng Gao$^a$, Zongzhen Xie$^{b,} $\thanks{Corresponding author.}\\
{\it \footnotesize $^a$Department of Mathematics, Nanjing University, Nanjing 210093,  P.R. China}\\
{\it \footnotesize $^b$Department of Mathematics and Computer Science,  School of Biomedical Engineering and Informatics,}\\
{\it \footnotesize  Nanjing  Medical University, Nanjing 211166, P.R. China}}
\date{}
\maketitle
\baselineskip 15pt
%
%
\begin{abstract}
Let $B$ be the one-point extension algebra of  $A$ by an $A$-module $X$. We proved that every support $\tau$-tilting $A$-module can be extended to be a support $\tau$-tilting $B$-module by two different ways.  As a consequence,  it is shown that there is an  inequality $$|\stilt B|\geqslant 2|\stilt A|.$$
\vspace{10pt}

\noindent {\it 2020 Mathematics Subject Classification}: 16E30, 16G20.


\noindent {\it Keywords and phrases}: support $\tau$-tilting modules, semibricks, one-point extensions.

\end{abstract}
%
\vskip 30pt

\section{Introduction}
 Tilting modules are very important in the  representation  theory of finite dimensional algebras. Mutation is an effective way to construct a new tilting  module from  a given one.  Unfortunately,  mutation of tilting modules may not be realized.
 
In 2014,  Adachi, Iyama and Reiten \cite{AIR} introduced  the concept of support $\tau$-tilting module  as a generalization of tilting modules, 
and they showed that mutation of  support $\tau$-tilting modules is always possible.  The authors  also proved that  support $\tau$-tilting modules   are in bijection with some important classes in representation theory (such as, functorially finite torsion classes, 
 2-term silting complexes, and cluster-tilting objects in the cluster category).
 
 A new (support $\tau$)-tilting module can be constructed by algebra extensions. In \cite{AHT}, Assem, Happel and Trepode  studied how to extend and restrict tilting modules for one-point extension algebras
by a projective module. In  \cite{S}, Suarez generalized this result for the context of support
support $\tau$-tilting modules. More precisely, let $B=A[P]$ be the one-point extension of an algebra $A$
by a projective $A$-module $P$ and $e$ the identity of $A$. If $M$ is a  support
$\tau$-tilting $A$-module, then $\Hom_B(e B, M)\oplus S_a$ is a  support $\tau$-tilting $B$-module,
where $S_a$ is the simple module corresponding to the new point $a$ (see \cite[Theorem A]{S}). An example shown that $\Hom_B(e B, M)\oplus S_a$ may not be  a  support $\tau$-tilting $B$-module if $P$ is not projective (see \cite[Example 4.7]{S}). 

Bricks and semibricks are considered in \cite{Gab1962,Ringel1976}. An $A$-module $M$ is called $brick$ if $\Hom_\Lambda(M,M)$ is a $k$ division.  A $semibrick$ is a set consisting of isoclasses of pairwise Hom-orthogonal bricks. Let  $\mathcal{S}$ be a sembrick and $T(\mathcal{S})$  the smallest torsion class  containing $\mathcal{S}$.
In \cite{Asai2018}, the author called a semibrick $\mathcal{S}$ is $left$ $finite$ if   $T(\mathcal{S})$  is functorially finite and he also proved that 
 there exists a bijection $\Phi : \stilt A\mapsto \text{f}_L\text{-sbrick}A$ between the set of  support $\tau$-tilting $A$-modules  and   the set of left finite semibricks of $A$.

In this paper, we construct semibricks over the one-point extension $B$ of an algebra $A$ by an $A$-module $X$ (may not be projective) and use the bijection to get support $\tau$-tilting $B$-modules.

\begin{proposition}\label{1.1}{\rm (see Proposition \ref{3.2})}
Let $B$ be the one-point extension algebra of $A$ by an $A$-module $X$ and   $\mathcal{S}$ be a semibrick in $\mod A$. Then both $\mathcal{S}$ and $\mathcal{S}\cup S_a$ are semibricks in $\mod B$, where $S_a$ stands for the simple module corresponding to the extension point $a$.
\end{proposition}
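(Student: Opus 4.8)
The plan is to work with the standard description of modules over the one-point extension $B=\begin{pmatrix} A & X\\ 0 & k\end{pmatrix}$: a (left) $B$-module is a triple $(N,W,\psi)$ consisting of $N\in\mod A$, a $k$-vector space $W$, and an $A$-homomorphism $\psi\colon X\otimes_k W\to N$, where a morphism $(N,W,\psi)\to (N',W',\psi')$ is a pair $(f,g)$ with $f\in\Hom_A(N,N')$ and $g\in\Hom_k(W,W')$ compatible with $\psi,\psi'$. Under this description $\mod A$ is identified with the full subcategory of triples of the form $(N,0,0)$, and the new simple $S_a$ is the triple $(0,k,0)$; I will write $\iota\colon\mod A\to\mod B$ for this embedding. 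The first thing I would record is that $\iota$ is exact and fully faithful, so that $\Hom_B(\iota M,\iota M')\cong \Hom_A(M,M')$ for all $M,M'\in\mod A$; this is immediate from the morphism description, since the $W$-components vanish on the image of $\iota$.

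Granting this, the claim for $\mathcal{S}$ alone is essentially formal. Each $M\in\mathcal{S}$ is a brick in $\mod A$, and full faithfulness gives $\End_B(\iota M)\cong\End_A(M)$, a division ring, so $\iota M$ remains a brick in $\mod B$. Likewise, for distinct $M,M'\in\mathcal{S}$ we obtain $\Hom_B(\iota M,\iota M')\cong\Hom_A(M,M')=0$ and symmetrically, so the members of $\mathcal{S}$ stay pairwise Hom-orthogonal. Hence $\mathcal{S}$ is a semibrick in $\mod B$.

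For $\mathcal{S}\cup\{S_a\}$ it remains to bring in $S_a$. Since $S_a$ is a simple $B$-module, $\End_B(S_a)$ is a division ring (in fact $k$) by Schur's lemma, so $S_a$ is a brick. The heart of the argument is then to verify the two orthogonality relations $\Hom_B(S_a,\iota M)=0=\Hom_B(\iota M,S_a)$ for every $M\in\mathcal{S}$. Here I would use the idempotent $e_a$ at the new vertex, which acts on a triple $(N,W,\psi)$ as projection onto $W$: a nonzero map $S_a\to\iota M$ would exhibit the simple $S_a$ as a submodule of $\iota M$, and a nonzero map $\iota M\to S_a$ would exhibit it as a quotient, so in either case $e_a(\iota M)$ would be nonzero. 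But $\iota M=(M,0,0)$ has zero $W$-component, hence $e_a(\iota M)=0$ and $S_a$ occurs neither in its socle nor in its top. Both Hom-spaces therefore vanish, and combined with the previous paragraph this shows $\mathcal{S}\cup\{S_a\}$ is a semibrick.

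The computation is light, and the only point needing care is this last vanishing, which is exactly where the structure of the extension enters. I would emphasize that it is insensitive to whether $X$ is projective — in contrast to the support $\tau$-tilting statements of \cite{S}, where non-projectivity of $X$ genuinely obstructs the naive extension — because semibricks are defined purely through $\Hom$, whereas $X$ affects only $\Ext^1_B$ and the projective cover of $S_a$, not the relevant Hom-spaces.
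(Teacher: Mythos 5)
Your proof is correct and follows essentially the same route as the paper: the paper's Lemma 3.1 establishes exactly your key vanishing $\Hom_B(S_a,\iota M)=0=\Hom_B(\iota M,S_a)$ (by reading off the morphism pairs directly, where you argue via the idempotent $e_a$ and simplicity of $S_a$), and the proposition then follows from full faithfulness of the embedding $\mod A\hookrightarrow\mod B$, which you state explicitly and the paper uses implicitly.
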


Moreover,  it is shown that $\mathcal{S}$ is left finite implies $\mathcal{S}\cup S_a$ is also. We say an $A$-module $M$ is a support $\tau$-tilting module with respect to the semibrick $\mathcal{S}$ if  $\Phi(M)=\mathcal{S}$. 
As an application, we can construct  support $\tau$-tilting modules over one-point extensions from support $\tau$-tilting $A$-modules.
\begin{proposition}\label{1.2}{\rm (see Proposition \ref{1})}
Let $B$ be the  one-point extension algebra of $A$ by an $A$-module $X$
and $M\in\mod A$ be a support $\tau$-tilting module with respect to the semibrick $\mathcal{S}$. Then both $P(T(\mathcal{S}))$ and $P(T(\mathcal{S}\cup S_a))$ are support $\tau$-tilting $B$-modules.
\end{proposition}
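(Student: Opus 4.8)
The plan is to reduce the statement, via the bijection $\Phi$ of \cite{Asai2018} applied to the algebra $B$, to the assertion that $\mathcal{S}$ and $\mathcal{S}\cup S_a$ are \emph{left finite} semibricks in $\mod B$. Indeed, $\Phi^{-1}$ sends a left finite semibrick $\mathcal{S}'$ over $B$ to the Ext-projective generator $P(T(\mathcal{S}'))$ of the functorially finite torsion class $T(\mathcal{S}')$, which is a support $\tau$-tilting $B$-module. By Proposition \ref{1.1} both $\mathcal{S}$ and $\mathcal{S}\cup S_a$ are already semibricks in $\mod B$, so the whole proof amounts to proving that the torsion classes $T(\mathcal{S})$ and $T(\mathcal{S}\cup S_a)$ are functorially finite in $\mod B$. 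Throughout I would write a $B$-module as a triple $(Y,V,f)$, where $Y=eZ$ is its restriction along the idempotent $e$ (the identity of $A$), $V$ is the vector space at the extension point $a$, and $f$ is the structure map; the functor $Z\mapsto eZ$ is exact.

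The first point I would record is that $\mod A$, regarded inside $\mod B$ as the modules vanishing at $a$, is closed under quotients and extensions, since $e(-)$ is exact. As $\mathcal{S}\subseteq\mod A$, closing $\mathcal{S}$ under quotients and extensions stays inside $\mod A$, whence $T(\mathcal{S})$ formed in $\mod B$ agrees with $T(\mathcal{S})$ formed in $\mod A$; the latter is functorially finite because $\mathcal{S}$ is left finite over $A$. To promote this to functorial finiteness in $\mod B$ I note that, since $\mod B$ is a length category, the torsion submodule gives contravariant finiteness automatically, so only covariant finiteness needs checking. Every $Z$ admits a largest $A$-module quotient $\bar{Z}$ through which all homomorphisms from $Z$ to an object of $T(\mathcal{S})$ factor; composing the epimorphism $Z\twoheadrightarrow\bar{Z}$ with a left $T(\mathcal{S})$-approximation of $\bar{Z}$ in $\mod A$ produces a left $T(\mathcal{S})$-approximation of $Z$ in $\mod B$. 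Hence $\mathcal{S}$ is left finite in $\mod B$.

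For $\mathcal{S}\cup S_a$ I would first identify the generated torsion class explicitly as $T(\mathcal{S}\cup S_a)=\{Z\in\mod B : eZ\in T(\mathcal{S})\}$, the preimage of $T(\mathcal{S})$ under the exact functor $e(-)$. The inclusion ``$\subseteq$'' holds because the right-hand side is a torsion class containing $\mathcal{S}$ and $S_a$, and ``$\supseteq$'' follows from the exact sequence $0\to eZ\to Z\to S_a^{\,n}\to 0$, which displays any such $Z$ as an extension of copies of $S_a$ by $eZ\in T(\mathcal{S})$. Again contravariant finiteness is automatic, and for covariant finiteness, given $Z=(Y,V,f)$ I would choose a left $T(\mathcal{S})$-approximation $g\colon Y\to Y'$ in $\mod A$ and push $f$ forward to form $W=(Y',V,gf)$; then $eW=Y'\in T(\mathcal{S})$, so $W\in T(\mathcal{S}\cup S_a)$, and the map $(g,\mathrm{id}_V)\colon Z\to W$ is a left $T(\mathcal{S}\cup S_a)$-approximation. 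Thus $\mathcal{S}\cup S_a$ is left finite in $\mod B$.

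The main obstacle, as I see it, is the covariant half of functorial finiteness over $B$, since the contravariant half is formal for torsion classes; concretely, the crux is the explicit description of $T(\mathcal{S}\cup S_a)$ via restriction and the verification that the pushed-forward comparison map $(g,\mathrm{id}_V)$ really is a left approximation, which requires checking that the factorization produced by the $A$-approximation $g$ is compatible with the structure morphism $f$ of the one-point extension. Once both semibricks are known to be left finite, applying $\Phi^{-1}$ over $B$ yields $P(T(\mathcal{S}))$ and $P(T(\mathcal{S}\cup S_a))$ as support $\tau$-tilting $B$-modules, which is the desired conclusion.
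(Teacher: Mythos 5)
Your proposal is correct and follows essentially the same route as the paper: you identify $T(\mathcal{S}\cup S_a)$ as the preimage of $T(\mathcal{S})$ under the restriction functor (the paper's Lemma \ref{3.3}), construct left approximations by pushing the structure map forward along a left $T(\mathcal{S})$-approximation of the restriction and checking compatibility (the paper's Proposition \ref{3.4}), and conclude via the Asai/AIR bijection between left finite semibricks (equivalently, functorially finite torsion classes) and support $\tau$-tilting modules. If anything, you supply two details the paper glosses over: the verification that $\mathcal{S}$ itself stays left finite in $\mod B$ (via the largest $A$-module quotient $Z\twoheadrightarrow \bar{Z}$), which the paper asserts with a bare ``hence,'' and the observation that contravariant finiteness of a torsion class is automatic from the torsion submodule, where the paper only appeals to duality.
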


As a consequence,  we have the following  inequality
\begin{corollary}
$|\stilt B|\geqslant 2|\stilt A|.$\end{corollary}

Moreover, we have 

\begin{theorem}{\rm (see Theorem \ref{3.9})} Let $B$ be  the one-point extension algebra of $A$ by an $A$-module $X$ and  $M$ be a support $\tau$-tilting module in $\mod A$. Then
\begin{enumerate}
\item [(1)] $M$ is a support $\tau$-tilting $B$-module. 
\item [(2)] Assume that  $M\in\mod A$ is  a support $\tau$-tilting module with respect to the semibrick $\mathcal{S}$, then $P(T(\mathcal{S}\cup S_a))$ has $M$ as direct summand.
\item[(3)]  If $X\in \Fac M$, then  $P_a\oplus M$ is a support $\tau$-tilting $B$-module. 
\item[(4)]  If $\Hom_A(X,\Fac M)=0$, then  $S_a\oplus M$ is a support $\tau$-tilting $B$-module. 
\end{enumerate}
\end{theorem}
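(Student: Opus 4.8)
The plan is to recast the whole statement in the language of functorially finite torsion classes and their $\Ext$-projective objects, so that no direct computation of $\tau_B$ is needed. Identify $\mod B$ with the category of triples $N=(N_A,N_a,f_N)$, where $N_A\in\mod A$, $N_a\in\mod k$ and $f_N\colon X\otimes_k N_a\to N_A$ is an $A$-homomorphism; here $N_A=e_AN$, $N_a=e_aN$, the assignment $N\mapsto N_A$ is exact, $S_a=(0,k,0)$, $P_a=(X,k,\mathrm{id}_X)$, and the $A$-modules are exactly the triples with $N_a=0$. Put $\mathcal S=\Phi(M)$, $\mathcal T=T(\mathcal S)=\Fac M$ and $\mathcal T'=T(\mathcal S\cup S_a)$; by Proposition \ref{1.2} both $P(\mathcal T)$ and $P(\mathcal T')$ are support $\tau$-tilting $B$-modules, and for a functorially finite torsion class the additive closure of its support $\tau$-tilting module equals the class of its $\Ext$-projective objects, a fact I use repeatedly. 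Two standing observations come first: $\mathcal T$, computed in $\mod B$, still consists only of $A$-modules and equals $\Fac_A M$ (a quotient or extension in $\mod B$ of triples with vanishing $a$-entry again has vanishing $a$-entry); and for $A$-modules $P,N$ one has $\Ext^i_B(P,N)=\Ext^i_A(P,N)$ and $\Ext^i_B(P,S_a)=0$, because the minimal $A$-projective resolution of an $A$-module, viewed via $P^A_i\mapsto(P^A_i,0,0)$, is already its minimal $B$-projective resolution while $\Hom_B((P^A_i,0,0),S_a)=0$. Granting these, part (1) is immediate: the $\Ext$-projectives of $\mathcal T$ in $\mod B$ coincide with those in $\mod A$, hence $P(T(\mathcal S))=M$, which by Proposition \ref{1.2} is a support $\tau$-tilting $B$-module.

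For (2) the crux is that $\Ext$-projectivity in $\mathcal T'$ must be tested against all of $\mathcal T'$, and since $\mathcal T'$ is closed under quotients a naive reduction to the generators $\mathcal S\cup S_a$ fails. I would circumvent this by introducing $\mathcal C=\{N\in\mod B : N_A\in\mathcal T\}$. As $N\mapsto N_A$ is exact, $\mathcal C$ is closed under quotients and extensions, hence a torsion class, and it plainly contains $\mathcal S$ and $S_a$; therefore $\mathcal T'\subseteq\mathcal C$, so $N_A\in\mathcal T$ for every $N\in\mathcal T'$. Applying $\Hom_B(P,-)$, for an indecomposable summand $P$ of $M$, to the canonical sequence $0\to(N_A,0,0)\to N\to S_a^{\oplus\dim N_a}\to 0$ and using $\Ext^1_B(P,S_a)=0$, the long exact sequence yields a surjection $\Ext^1_B(P,N_A)\twoheadrightarrow\Ext^1_B(P,N)$, and $\Ext^1_B(P,N_A)=\Ext^1_A(P,N_A)=0$ since $N_A\in\mathcal T$ and $P$ is $\Ext$-projective in $\mathcal T$. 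Thus every summand of $M$ is $\Ext$-projective in $\mathcal T'$ and lies in $\mathcal T'\,(\supseteq\mathcal T)$, so $M\mid P(\mathcal T')$.

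Next I would pin down the single remaining summand. Writing $|{-}|$ for the number of nonisomorphic indecomposable summands, recall that for a support $\tau$-tilting module this equals the number of vertices in its support, and that the support of a torsion class is the set of composition factors occurring in it. The inclusion $\mathcal T'\subseteq\mathcal C$ forces the support of $\mathcal T'$ to be $\mathrm{supp}\,M\cup\{a\}$, so $|P(\mathcal T')|=|M|+1$; combined with $M\mid P(\mathcal T')$ this gives $P(\mathcal T')=M\oplus Y$ with $Y$ indecomposable and $Y\notin\add M$.

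It remains to identify $Y$ under the two hypotheses. For (3), if $X\in\Fac M$ then $P_a$ is an extension of $S_a$ by $(X,0,0)$ with both terms in $\mathcal T'$, so $P_a\in\mathcal T'$; being projective it is $\Ext$-projective and is not a summand of $M$, whence $Y\cong P_a$ and $P_a\oplus M=P(\mathcal T')$. For (4), applying $\Hom_B(-,N)$ to $0\to(X,0,0)\to P_a\to S_a\to 0$ identifies $\Ext^1_B(S_a,N)$ with $\Coker\big(N_a\xrightarrow{\widetilde{f_N}}\Hom_A(X,N_A)\big)$, where $\widetilde{f_N}$ is the adjoint of $f_N$; since $N_A\in\mathcal T$ for $N\in\mathcal T'$, the hypothesis $\Hom_A(X,\Fac M)=0$ makes this cokernel vanish, so $S_a$ is $\Ext$-projective in $\mathcal T'$ and $Y\cong S_a$, giving $S_a\oplus M=P(\mathcal T')$. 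The main obstacle throughout is precisely the $\Ext$-projectivity test against the entire torsion class $\mathcal T'$ in (2); the auxiliary torsion class $\mathcal C$ together with the canonical $S_a$-sequence is the device that makes it tractable, after which the support count and the two short identifications of $Y$ finish (3) and (4).
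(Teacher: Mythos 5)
Your proof is correct, and for parts (1) and (2) it is essentially the paper's own argument: both run through Proposition \ref{1} (Asai's bijection combined with \cite[Theorem 2.7]{AIR}), and both test Ext-projectivity in $T(\mathcal{S}\cup S_a)$ via the exact sequence $0\to (N_A,0,0)\to N\to S_a^{\oplus d}\to 0$ together with the vanishing of $\Ext^1_B$ into $S_a$ (which you, unlike the paper, actually justify, via the fact that $A$-projectives remain $B$-projective); your auxiliary torsion class $\mathcal{C}$ is precisely the right-hand side of the paper's Lemma \ref{3.3}, except that you establish and use only the inclusion $T(\mathcal{S}\cup S_a)\subseteq\mathcal{C}$, which indeed suffices. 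The genuine divergence is the endgame of (3) and (4). The paper, after showing that $P_a$ (resp.\ $S_a$) is Ext-projective in $T(\mathcal{S}\cup S_a)$, infers only that $P_a\oplus M$ (resp.\ $S_a\oplus M$) is $\tau$-rigid, and then completes it by hand to a support $\tau$-tilting pair $(P_a\oplus M,P)$, checking $\Hom_B(P,P_a)\cong\Hom_A(P,X)=0$ (resp.\ $\Hom_B(P,S_a)=0$) and the count $|P_a\oplus M|+|P|=1+|A|=|B|$. You instead count the indecomposable summands of $P(T(\mathcal{S}\cup S_a))$ itself: the inclusion $T(\mathcal{S}\cup S_a)\subseteq\mathcal{C}$ confines the composition factors of the torsion class to those of $M$ together with $S_a$, whence $|P(T(\mathcal{S}\cup S_a))|=|M|+1$, and the unique indecomposable summand outside $\add M$ is then identified as $P_a$ (resp.\ $S_a$). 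Your route yields the sharper statement, left implicit in the paper, that $P(T(\mathcal{S}\cup S_a))$ equals $P_a\oplus M$ (resp.\ $S_a\oplus M$) on the nose, and it never has to touch the projective part $P$ of the pair; the price is the counting lemma that $|N|$ equals the number of vertices in the support of $\Fac N$ for $N$ support $\tau$-tilting, which is standard (it follows from sincerity of $\tau$-tilting modules over the quotient algebra, recorded in Section \ref{sect 2}) but should be stated and justified explicitly rather than merely recalled. Finally, your identification $\Ext^1_B(S_a,N)\cong\Coker\bigl(N_a\to\Hom_A(X,N_A)\bigr)$ in (4) is a clean one-step substitute for the paper's two-step vanishing argument.
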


Throughout this paper, all algebras will be  basic connected finite dimensional $k$-algebras over  an algebraically closed field $k$ and all modules are basic. Let $A$ be an algebra.  
 The category of finitely generated left $A$-modules will be  denote by $\mod A$ and   the Auslander-Reiten
  translation of $A$ will be  denote by $\tau$.  For  $M\in \mod A$, we denote by $\ind(M)$ the set of isoclasses of indecomposable direct summands of $M$, and by $\Fac M$  the full subcategory of $\mod A$ consisting
of modules isomorphic to factor modules of finite direct sums of copies of $M$.  For a finite set $J$,  $|J|$ stands for  the cardinality of $J$. In particular, we write $|M|=|\ind (M)|$. $\mathbb{N}$ will be the set of all   natural numbers.

\section{Preliminaries}\label{sect 2}

Let $A$ be an algebra.  In this section, we recall some definitions about support $\tau$-tilting modules and semibircks over $\mod A$.

\begin{definition}\label{2.1} {\rm (\cite[Definition 0.1]{AIR})}
Let $M\in\mod A$.
\begin{enumerate}
\item[(1)] $M$ is called {\it $\tau$-rigid} if $\Hom_A(M,\tau M)=0$.
\item[(2)] $M$ is called {\it $\tau$-tilting}  if it is $\tau$-rigid and $|M|=|A|$.
\item[(3)] $M$ is called {\it support $\tau$-tilting} if it is a $\tau$-tilting $A/\langle e\rangle$-module
where $e$ is an idempotent of $A$.
\end{enumerate}
\end{definition}

We will denote by  $\tilt A$ (respectively, $\stilt A$)  the set of isomorphism classes of $\tau$-tilting $A$-modules (respectively, support $\tau$-tilting $A$-modules).

\begin{definition} \label{a} {\rm (\cite[Definition 0.3]{AIR})
Let $(M,P)$ be a pair in $\mod A$ with $P$ projective.
\begin{enumerate}
\item[(1)] The pair $(M, P)$ is called a {\it $\tau$-rigid pair} if $M$ is $\tau$-rigid and $\Hom_A(P,M)=0$.
\item[(2)] The pair $(M, P)$ is called a {\it support $\tau$-tilting pair}
if it is $\tau$-rigid and $|M|+|P|=|A|$.
\end{enumerate}}
\end{definition}

Note that $(M,P)$ is a support $\tau$-tilting pair if and only if $M$ is a $\tau$-tilting $A/\langle e\rangle$-module,
where $eA\cong P$ \cite[Proposition  2.3]{AIR}. Hence, $M$ is a $\tau$-tilting $A$-module if and only if $(M,0)$ is a support $\tau$-tilting pair.

The following result is very useful.

\begin{lemma}\label{b}  {\rm (\cite[Proposition 5.8 ]{AS1981})} For $M\in\mod A$, $M$ is $\tau$-rigid if and only if $\Ext^1_A(M,\Fac M)=0$.
\end{lemma}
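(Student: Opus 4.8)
The plan is to verify each statement by exhibiting an explicit support $\tau$-tilting \emph{pair} over $B$ in the sense of Definition \ref{a} and checking its two defining conditions ($\tau$-rigidity of the module part and $\Hom_B$-orthogonality to the projective part), converting every $\tau$-rigidity test into an $\Ext^1$-vanishing statement through Lemma \ref{b}. The whole argument rests on two functors between $\mod A$ and $\mod B$: the exact fully faithful embedding $i_*\colon \mod A\to\mod B$ sending an $A$-module $U$ to the $B$-module $(U,0,0)$ supported away from the extension point $a$, and the exact ``$A$-part'' functor $(-)_A\colon\mod B\to\mod A$. First I would record the facts I need about them: $i_*$ carries indecomposable projective $A$-modules to (old) indecomposable projective $B$-modules, there is a short exact sequence $0\to i_*X\to P_a\to S_a\to 0$ with $(P_a)_A\cong X$ and $(S_a)_A=0$, and --- most importantly --- the comparison isomorphism
\begin{equation*}
\Ext^n_B(i_*U,N)\;\cong\;\Ext^n_A(U,N_A)\qquad(U\in\mod A,\ N\in\mod B,\ n\ge 0).
\end{equation*}
This last isomorphism follows because $i_*$ sends an $A$-projective resolution of $U$ to a $B$-projective resolution of $i_*U$, while $\Hom_B(i_*Q,N)=\Hom_A(Q,N_A)$ for projective $Q$; establishing it cleanly is the technical core of the whole proof.

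For (1), since $M$ is support $\tau$-tilting over $A$ there is a support $\tau$-tilting pair $(M,P)$ with $|M|+|P|=|A|$. I would test the pair $(i_*M,\ i_*P\oplus P_a)$ over $B$. Its cardinality is $|M|+|P|+1=|B|$; the orthogonality $\Hom_B(i_*P\oplus P_a,\, i_*M)=0$ holds because $\Hom_B(i_*P,i_*M)=\Hom_A(P,M)=0$ and $P_a$ has no map into the point-$a$-free module $i_*M$; and $\tau$-rigidity of $i_*M$ follows from Lemma \ref{b} together with $\Fac_B(i_*M)=i_*(\Fac_A M)$ and the comparison isomorphism, which reduce $\Ext^1_B(i_*M,\Fac_B i_*M)$ to $\Ext^1_A(M,\Fac_A M)=0$. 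For (2), write $\mathcal{T}:=T(\mathcal{S}\cup S_a)$, so that $P(\mathcal{T})$ is support $\tau$-tilting by Proposition \ref{1}. I would show each indecomposable summand of $i_*M$ is an Ext-projective object of $\mathcal{T}$: the comparison isomorphism gives $\Ext^1_B(i_*M,N)=\Ext^1_A(M,N_A)$, and I would prove $N_A\in T(\mathcal{S})=\Fac_A M$ for every $N\in\mathcal{T}$ by checking that $\{N\in\mathcal{T}: N_A\in T(\mathcal{S})\}$ is a torsion class containing the generators $\mathcal{S}\cup S_a$ (here exactness of $(-)_A$ and $(S_a)_A=0$ are used). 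Since $M$ is Ext-projective in $\Fac_A M$, this forces $\Ext^1_B(i_*M,\mathcal{T})=0$; as the indecomposable Ext-projectives of a functorially finite torsion class are exactly the summands of $P(\mathcal{T})$, we get $i_*M\mid P(\mathcal{T})$.

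For (3) and (4) I would again use pairs with projective part $i_*P$, namely $(P_a\oplus i_*M,\ i_*P)$ and $(S_a\oplus i_*M,\ i_*P)$; both have cardinality $|B|$. In (3) the orthogonality reduces to $\Hom_B(i_*P,P_a)=\Hom_A(P,X)=0$, which holds because $X\in\Fac_A M$ and $\Hom_A(P,\Fac_A M)=0$ ($P$ is projective with $\Hom_A(P,M)=0$); and $\tau$-rigidity of $P_a\oplus i_*M$ reduces, via projectivity of $P_a$ and the comparison isomorphism, to $\Ext^1_A(M,N_A)=0$ for $N\in\Fac_B(P_a\oplus i_*M)$, where right-exactness of $(-)_A$ gives $N_A\in\Fac_A(X\oplus M)=\Fac_A M$ (using $X\in\Fac_A M$). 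In (4) the orthogonality is automatic since $(S_a)_A=0$, and the only remaining point is $\Ext^1_B(S_a,N)=0$ for $N\in\Fac_B(S_a\oplus i_*M)$.

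The main obstacle is this last computation, the one Ext-group not covered by the comparison isomorphism because $S_a$ does not come from $A$. I would compute it from the short exact sequence $0\to i_*X\to P_a\to S_a\to 0$: applying $\Hom_B(-,N)$ and using $\Ext^1_B(P_a,N)=0$ yields
\begin{equation*}
\Ext^1_B(S_a,N)\;\cong\;\Coker\!\big(\Hom_B(P_a,N)\longrightarrow \Hom_A(X,N_A)\big).
\end{equation*}
Since $N\in\Fac_B(S_a\oplus i_*M)$ forces $N_A\in\Fac_A M$ (again by right-exactness of $(-)_A$, as $(S_a)_A=0$), the hypothesis $\Hom_A(X,\Fac_A M)=0$ makes the target $\Hom_A(X,N_A)$ vanish, whence $\Ext^1_B(S_a,N)=0$. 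With this, Lemma \ref{b} gives $\tau$-rigidity of $S_a\oplus i_*M$, completing (4). The delicate points to get right are the exactness and adjunction bookkeeping underlying the comparison isomorphism and the fact that the relevant $A$-parts never leave $\Fac_A M$; once these are in place, all four statements fall out of the pair characterization together with Lemma \ref{b}.
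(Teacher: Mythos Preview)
Your proposal does not address the stated lemma at all. Lemma~\ref{b} is the Auslander--Smal\o\ characterization of $\tau$-rigidity, which the paper merely records as a citation to \cite[Proposition~5.8]{AS1981} and does not prove; your write-up, by contrast, \emph{uses} Lemma~\ref{b} as a black box (``converting every $\tau$-rigidity test into an $\Ext^1$-vanishing statement through Lemma~\ref{b}'') in order to prove the four parts of Theorem~\ref{3.9}. As a proof of the statement you were asked to prove, the proposal is therefore off target: nothing in it establishes the equivalence $\Hom_A(M,\tau M)=0 \Leftrightarrow \Ext^1_A(M,\Fac M)=0$.

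If your intended target was actually Theorem~\ref{3.9}, then your sketch is essentially correct and parallels the paper's proof closely. Both arguments verify the support $\tau$-tilting pair conditions of Definition~\ref{a} directly, both reduce the $\tau$-rigidity of the module part to $\Ext^1_A(M,\Fac_A M)=0$ via Lemma~\ref{b}, and both handle the exceptional case (4) through the short exact sequence $0\to X\to P_a\to S_a\to 0$. The only organizational difference is that you package the Ext-comparisons via the functorial isomorphism $\Ext^n_B(i_*U,N)\cong\Ext^n_A(U,N_A)$ and the $A$-part functor, whereas the paper uses the explicit description of $T(\mathcal{S}\cup S_a)$ from Lemma~\ref{3.3} and the displayed short exact sequence~(1) to the same effect; neither approach offers a real advantage over the other.
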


\begin{definition}\label{2.2}{\rm (\cite[Definition 2.1]{Asai2018})} Let $\mathcal{S} \subseteq\mod A$. $\mathcal{S}$ is called a $semibrick$ if
 $$\Hom_A(S_i,S_j)=\begin{cases}
\text{$k$-division algebra}& if ~~i=j\\
0&~~ otherwise
\end{cases} $$
for any $S_i, S_j\in \mathcal{S}$.  
\end{definition}

 By Schur's Lemma,  a set of isoclasses of some simple modules is a semibrick.

Let $\mathcal{Y}$ be a full subcategory of $\mod A$ and $M\in \mod A$.   A  homomorphism $f_M:M\to Y_M$ is called left $\mathcal{Y}$-approximation of $M$
with $Y_M\in\mathcal{Y}$ if  any morphism $f:M\to Y$ with $Y\in\mathcal{Y}$  factors through $f_M$.  We say that  $\mathcal{Y}$ is $covariantly~ finite$ if for any $M\in \mod\Lambda$, there exists a left $\mathcal{Y}$-approximation of $M$. 
Dually, we can define the concepts of  right $\mathcal{Y}$-approximation of $M$ and $contravariantly~finite$ subcategories. $\mathcal{Y}$ is called $functorially~ finite$ if it is both  covariantly finite and contravariantly finite.  

A $torsion$ $class$ of $\mod A$ is a  full subcategory  of $ \mod A$  closed under  images, direct sums, and extensions.  Recall that a semibrick $\mathcal{S}$  of $\mod A$ is $left~ finite$\cite{Asai2018}  if   $T(\mathcal{S})$    is functorially finite, where  $T(\mathcal{S})$  is the smallest torsion class  containing $\mathcal{S}$. The set of all left finite semibricks of $\mod A$ will be denoted by $\f_L$-$\sbrick A$.

The following result states the relationship between $\stilt A$ and $\f_L$-$\sbrick A$.

 \begin{theorem}\label{2.3}{\rm \cite[Theorem 1.3(2)]{Asai2018}}  there exists a bijection

 $$\Phi : \stilt A\mapsto \f_L\text{-}\sbrick A$$
  given by $M\mapsto \ind(M/\rad_\Gamma M)$ where $\Gamma=\End_A(M)$.\end{theorem}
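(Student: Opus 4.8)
The plan is to realize $\Phi$ as the composite of two bijections that pass through the set of functorially finite torsion classes of $\mod A$, and then to check that this composite is exactly $M\mapsto\ind(M/\rad_\Gamma M)$. First I would invoke the Adachi--Iyama--Reiten correspondence \cite{AIR}, which asserts that $M\mapsto\Fac M$ is a bijection from $\stilt A$ onto the set of functorially finite torsion classes, with inverse sending a functorially finite torsion class $\mathcal{T}$ to the basic direct sum $P(\mathcal{T})$ of the indecomposable $\Ext$-projective objects of $\mathcal{T}$. This reduces the theorem to producing a bijection between functorially finite torsion classes and left finite semibricks that, after precomposition with $\Fac$, recovers the stated formula.

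For the second bijection I would use the \emph{wide subcategory} attached to a torsion class. To $\mathcal{T}$ I associate
$$W(\mathcal{T})=\{X\in\mathcal{T}\mid \text{every } g\colon Y\to X \text{ with } Y\in\mathcal{T} \text{ satisfies } \Ker g\in\mathcal{T}\},$$
which is an exact abelian extension-closed subcategory whose simple objects are pairwise $\Hom$-orthogonal bricks, hence form a semibrick $\mathcal{S}(\mathcal{T})$. The two facts to establish are: (i) $\mathcal{T}$ is recovered as the smallest torsion class $T(\mathcal{S}(\mathcal{T}))$ containing these simples, so that $\mathcal{S}(\mathcal{T})$ is left finite exactly when $\mathcal{T}$ is functorially finite; and (ii) conversely, for a left finite semibrick $\mathcal{S}$ the wide subcategory $W(T(\mathcal{S}))$ has precisely $\mathcal{S}$ as its set of simples. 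Together these make $\mathcal{T}\mapsto\mathcal{S}(\mathcal{T})$ and $\mathcal{S}\mapsto T(\mathcal{S})$ mutually inverse bijections between functorially finite torsion classes and left finite semibricks.

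It then remains to identify $\mathcal{S}(\Fac M)$ with $\ind(M/\rad_\Gamma M)$ for $M\in\stilt A$ and $\Gamma=\End_A(M)$. Writing $M=\bigoplus_i M_i$ with the $M_i$ indecomposable, the module $M/\rad_\Gamma M$ is the sum of the relative tops $B_i=M_i/\sum\Im g$, the sum running over all non-isomorphisms $g\colon M_j\to M_i$. I would show that each $B_i$ is a brick lying in $\Fac M$, that $\Hom_A(B_i,B_j)=0$ for $i\ne j$, using that $M$ is $\Ext$-projective in $\Fac M$ together with $\tau$-rigidity in the form $\Ext^1_A(M,\Fac M)=0$ (Lemma \ref{b}), and finally that the $B_i$ are exactly the simple objects of $W(\Fac M)$; this last step rests on the functor $\Hom_A(M,-)$, which carries the $\Ext$-projective summand $M_i$ to the indecomposable projective $\Hom_A(M,M_i)$ over $\Gamma$ and its relative top $B_i$ to the corresponding simple $\Gamma$-module, matching $\ind(M/\rad_\Gamma M)$ against the simples of the wide subcategory.

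I expect the main obstacle to be this last identification (iii): reconciling the homologically defined semibrick $\mathcal{S}(\Fac M)$, namely the simples of $W(\Fac M)$, with the concretely defined relative tops $\ind(M/\rad_\Gamma M)$. Verifying the brick property and the $\Hom$-orthogonality of the $B_i$ is a delicate bookkeeping through $\rad\Gamma$, and pinning them down as the simples of $W(\Fac M)$ requires the full interplay between the $\Ext$-projective generator $M$, its endomorphism algebra $\Gamma$, and the torsion class $\Fac M$. By contrast, the torsion-class/semibrick correspondence of Steps (i)--(ii) is comparatively formal once the wide subcategory is in place.
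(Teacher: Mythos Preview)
The paper does not prove this statement at all: Theorem~\ref{2.3} is quoted verbatim from Asai \cite[Theorem~1.3(2)]{Asai2018} and is used as a black box, so there is no ``paper's own proof'' to compare your proposal against. Your outline---factoring $\Phi$ through the Adachi--Iyama--Reiten bijection $\stilt A\leftrightarrow\{\text{functorially finite torsion classes}\}$ and then through the torsion-class/wide-subcategory/semibrick correspondence---is indeed the strategy of Asai's original proof, and your identification of the delicate step (matching $\ind(M/\rad_\Gamma M)$ with the simples of $W(\Fac M)$) is accurate; but none of this appears in the present paper, which simply invokes the result.
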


 Recall that  $M\in\mod A$ is called $sincere$ if every simple $A$-module  appears as a composition factor in $M$.  A $\tau$-tilting $A$-module is exactly a sincere support $\tau$-tilting.  We say a semibrick $\mathcal{S}$  of $\mod A$ is $sincere$  if   $T(\mathcal{S})$  is sincere.
 Let $\sf_L$-$\sbrick A$ stand for all sincere left finite semibricks of $\mod A$. We have the following result due to Asai in \cite{Asai2018}.
 
 \begin{corollary}\label{2.4} There exists a bijection $\Phi : \tilt A\mapsto  \sf_L\text{-}\sbrick A.$
 \end{corollary}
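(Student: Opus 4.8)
The plan is to obtain this bijection by restricting the bijection $\Phi$ of Theorem \ref{2.3} to the appropriate sincere objects on each side. Since $\Phi$ already identifies $\stilt A$ with $\f_L$-$\sbrick A$, it suffices to verify that it carries the subset $\tilt A \subseteq \stilt A$ exactly onto the subset $\sf_L$-$\sbrick A \subseteq \f_L$-$\sbrick A$. As recalled in the text preceding the statement, a $\tau$-tilting module is precisely a sincere support $\tau$-tilting module, so $\tilt A = \{M \in \stilt A : M \text{ is sincere}\}$; and by definition $\sf_L\text{-}\sbrick A = \{\mathcal{S} \in \f_L\text{-}\sbrick A : T(\mathcal{S}) \text{ is sincere}\}$. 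The entire corollary therefore reduces to one equivalence: for $M \in \stilt A$ with $\mathcal{S} = \Phi(M)$, the module $M$ is sincere if and only if $T(\mathcal{S})$ is sincere.

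To handle this equivalence I would first invoke the torsion-theoretic description underlying Theorem \ref{2.3}: the semibrick $\mathcal{S} = \Phi(M)$ generates exactly the functorially finite torsion class attached to $M$, that is $T(\mathcal{S}) = \Fac M$. Under this identification the equivalence becomes the elementary assertion that $M$ is sincere if and only if $\Fac M$ is sincere. The forward implication is immediate, since $M$ itself lies in $\Fac M$. For the converse I would argue on composition factors: every object of $\Fac M$ is a quotient of some $M^n$, and the composition factors of any such quotient are among the composition factors of $M$; hence a simple module occurring in $\Fac M$ must already occur in $M$, so sincerity of $\Fac M$ forces sincerity of $M$.

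Putting the pieces together, $\Phi$ sends each $M \in \tilt A$ to a semibrick $\mathcal{S} = \Phi(M)$ with $T(\mathcal{S}) = \Fac M$ sincere, hence into $\sf_L$-$\sbrick A$, and the same chain of equivalences run backwards shows the image is all of $\sf_L$-$\sbrick A$; since $\Phi$ is a bijection on the ambient sets, its restriction is the desired bijection. I expect the one genuinely non-formal step to be the identification $T(\Phi(M)) = \Fac M$, as it requires unwinding the construction of $\Phi$ through functorially finite torsion classes rather than using Theorem \ref{2.3} as a black box; once this is granted, everything else is the short composition-factor argument above.
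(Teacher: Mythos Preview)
Your argument is correct. The paper itself does not supply a proof of this corollary; it merely attributes the result to Asai \cite{Asai2018} and moves on. What you have written is exactly the natural way to deduce it from Theorem~\ref{2.3}: restrict the ambient bijection to sincere objects on both sides and check compatibility. The identification $T(\Phi(M)) = \Fac M$ that you single out as the one non-formal step is indeed the essential point, and the paper does rely on it elsewhere (in the proof of Theorem~\ref{3.9}, citing \cite[Lemma~2.5(5)]{Asai2018}), so invoking it here is entirely in keeping with the paper's own usage. Once that is granted, your composition-factor argument for ``$M$ sincere $\Leftrightarrow$ $\Fac M$ sincere'' is clean and correct.
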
 
 
\section{Main results}\label{sect 3}

Let  $X\in \mod A$. The {\it one-point extension} of $A$ by $X$ is defined as the following matrix algebra
\begin{center}
$B=\left(\begin{matrix}
A &X\\
0 & k\\
\end{matrix}\right)$
\end{center}
with the ordinary matrix addition and the multiplication  induced by the module structure of $X$. We write $B:=A[X]$ with $a$ the extension point.  All $B$-modules can be viewed as ${M\choose k^n}_f$ where $M\in\mod A$, $n\in \mathbb{N}$  and $f\in \Hom_A(X\otimes_k k^n, M)$(see, \cite[\uppercase\expandafter{\romannumeral15.1}]{SS2007}). In particular,$S_a={0\choose k}_0$ and $P_a={X\choose k}_{1}$. Moreover,
the morphisms from ${M\choose  k^n}_{f}$ to ${M' \choose k^{n'}}_{f'}$ are pairs of ${\alpha\choose\beta}$
such that the following diagram
$$\xymatrix{X\otimes_k k^n\ar[d]_{X\otimes \beta}\ar[rr]^f&&M\ar[d]^\alpha\\
X\otimes_k k^{n'}\ar[rr]^{f'}&&M'\\}$$
commutes, where $\alpha\in\Hom_\Lambda(M,M')$ and $\beta\in\Hom_\Gamma(k^n,k^{n'})$.
A sequence
$$0 \to {M_1\choose k^{n_1}}_{f_1}\stackrel{{{\alpha_1\choose \beta_1}}}{\longrightarrow}
{M_2\choose k^{n_2}}_{f_2}\stackrel{{{\alpha_2\choose\beta_2}}}{\longrightarrow}{M_3\choose k^{n_3}}_{f_3}\to 0$$
in $\mod B$ is exact if and only if
$$0 \to M_1\stackrel{\alpha_1}{\longrightarrow} M_2\stackrel{\alpha_2}{\longrightarrow}M_3\to 0$$
is exact in $\mod A$ and
$$0 \to k^{n_1}\stackrel{\beta_1}{\longrightarrow}  k^{n_2}\stackrel{\beta_2}{\longrightarrow} k^{n_3}\to 0$$
is exact in $\mod k$.

\begin{lemma}\label{3.1} For any $M\in\mod A$, we have 
\begin{enumerate}
\item[(1)] $\Hom_B(S_a,M)=0$.
\item[(2)] $\Hom_B(M,S_a)=0$.
\end{enumerate}\end{lemma}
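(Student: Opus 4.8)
The plan is to read both vanishing statements straight off the explicit description of morphisms in $\mod B$ recalled immediately before the lemma. The one preliminary I need to pin down is how an $A$-module is regarded as a $B$-module: an object $M\in\mod A$ corresponds to $\binom{M}{0}_0$, the $B$-module whose $k$-component is zero (that is, $n=0$), whereas $S_a=\binom{0}{k}_0$ has zero $A$-component and one-dimensional $k$-component. With these two pictures fixed, every $B$-homomorphism between them is a pair $\binom{\alpha}{\beta}$ subject to the commuting-square condition, and I expect both $\alpha$ and $\beta$ to be forced to vanish because in each coordinate one side is the zero object.

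For (1) I would take an arbitrary $\binom{\alpha}{\beta}\colon S_a\to M$. By the morphism rule, $\alpha\in\Hom_A(0,M)$ and $\beta\in\Hom_k(k,0)$, and both of these spaces are zero; hence the only such morphism is the zero map, giving $\Hom_B(S_a,M)=0$. The compatibility square is vacuously satisfied and contributes nothing. Part (2) is entirely symmetric: a morphism $\binom{\alpha}{\beta}\colon M\to S_a$ has $\alpha\in\Hom_A(M,0)$ and $\beta\in\Hom_k(0,k)$, again both zero, so $\Hom_B(M,S_a)=0$.

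There is no genuine obstacle here; the whole lemma is a formal consequence of the matrix description, using only that $\Hom$ into or out of the zero vector space is trivial, and it makes no use of the bimodule $X$ or the structure map $f$. The only place demanding a moment's care is the identification of $M$ with $\binom{M}{0}_0$: one must not accidentally allow a nonzero $k$-component, since that is exactly what distinguishes a genuine $A$-module from modules like $P_a$ that involve the extension point.
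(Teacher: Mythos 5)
Your proof is correct and follows the same route as the paper: identify $M$ with $\binom{M}{0}_0$ and $S_a$ with $\binom{0}{k}_0$, then observe that any morphism between them has both components landing in (or coming from) zero objects, hence vanishes. The paper compresses this to ``it is clear since $_BM\cong\binom{M}{0}_0$''; your write-up simply makes the componentwise check explicit.
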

\begin{proof} It is clear since $_BM\cong {M\choose 0}_0$. Hence, $\Hom_B(S_a,M)\cong\Hom_B({0\choose k}_0,{M\choose 0}_0)=0$. Similarly, we can get $\Hom_B(M,S_a)=0$.
\end{proof}

\begin{proposition}\label{3.2}
Let  $\mathcal{S}$ be a semibrick in $\mod A$. Then both $\mathcal{S}$ and $\mathcal{S}\cup S_a$ are semibricks in $\mod B$.
\end{proposition}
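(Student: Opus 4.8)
The plan is to exploit the fully faithful embedding of $\mod A$ into $\mod B$ sending $M$ to ${M\choose 0}_0$, and then to treat the extra simple module $S_a$ separately using Lemma \ref{3.1}. The point is that the semibrick axioms are conditions on $\Hom$-spaces, so if the embedding preserves $\Hom$-spaces, the semibrick property of $\mathcal{S}$ transports automatically, and only the interaction with $S_a$ needs new input.

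First I would establish the key identity
$$\Hom_B(M,N)\cong\Hom_A(M,N)\qquad\text{for all }M,N\in\mod A,$$
where $M,N$ are regarded as the $B$-modules ${M\choose 0}_0$ and ${N\choose 0}_0$. This follows directly from the description of $B$-morphisms recalled above: taking $n=n'=0$ forces $\beta\colon k^0\to k^0$ together with both structure maps to be zero, so a $B$-morphism ${\alpha\choose\beta}$ reduces to an arbitrary $A$-morphism $\alpha\colon M\to N$ and the commuting square is vacuous. In particular $\End_B(S_i)\cong\End_A(S_i)$ and $\Hom_B(S_i,S_j)\cong\Hom_A(S_i,S_j)$ for all $S_i,S_j\in\mathcal{S}$. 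Hence each $S_i$ remains a brick in $\mod B$, and the pairwise $\Hom$-orthogonality is inherited verbatim from $\mod A$; this already shows that $\mathcal{S}$ is a semibrick in $\mod B$.

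Next I would adjoin $S_a$. Since $S_a={0\choose k}_0$ is a simple $B$-module, it is automatically a brick, so $\End_B(S_a)$ is a division algebra. It then remains to verify that $S_a$ is $\Hom$-orthogonal to each $S_i\in\mathcal{S}\subseteq\mod A$, and this is exactly the content of Lemma \ref{3.1}, which yields $\Hom_B(S_a,S_i)=0$ and $\Hom_B(S_i,S_a)=0$. Combining this with the previous paragraph, every pair of distinct members of $\mathcal{S}\cup S_a$ is $\Hom$-orthogonal and each member is a brick, so $\mathcal{S}\cup S_a$ is a semibrick in $\mod B$.

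I do not expect a serious obstacle. The only point demanding care is the identity $\Hom_B\cong\Hom_A$, i.e. checking that $M\mapsto{M\choose 0}_0$ is fully faithful; this is essentially the statement that $\mod A$ is realized as the full subcategory of $B$-modules with vanishing $k$-component. Once that is in place, the first claim is pure transport of the brick and orthogonality conditions, and the second claim reduces to Lemma \ref{3.1}.
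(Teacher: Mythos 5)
Your proof is correct and takes the same route as the paper, whose entire proof is the one-line citation ``It follows from Lemma \ref{3.1}.'' You have simply made explicit the two ingredients the paper leaves implicit: that $M\mapsto{M\choose 0}_0$ is a fully faithful embedding of $\mod A$ into $\mod B$ (so the brick and orthogonality conditions for $\mathcal{S}$ transport verbatim), and that $S_a$, being simple, is a brick whose $\Hom$-orthogonality to $\mathcal{S}$ is exactly Lemma \ref{3.1}.
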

\begin{proof}
It follows from Lemma \ref{3.1}.
\end{proof}

\begin{lemma}\label{3.3}
Let  $\mathcal{S}$ be a semibrick in $\mod A$. Then  $$T(\mathcal{S}\cup S_a)=\{{M\choose k^n}_f \mid \forall   n\in \mathbb{N}, ~ M\in T(\mathcal{S})~\text{ and }~f\in \Hom_A(X\otimes_k k^n, M)\}.$$
\end{lemma}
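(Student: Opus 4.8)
The plan is to prove the two inclusions separately. Write $\mathcal{C}$ for the right-hand side of the asserted equality, and recall from the matrix description of $\mod B$ that every $B$-module has the form ${M\choose k^n}_f$, that a $B$-morphism is an $A$-morphism $\alpha$ together with a linear map $\beta$ subject to the stated commuting square, and that a sequence in $\mod B$ is exact precisely when its $A$-component and its $k$-component are exact. These coordinatewise descriptions let me read off every torsion-theoretic operation on $\mathcal{C}$ one component at a time.

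For the inclusion $T(\mathcal{S}\cup S_a)\subseteq\mathcal{C}$ I would check that $\mathcal{C}$ is itself a torsion class of $\mod B$ containing $\mathcal{S}\cup S_a$; minimality of $T(\mathcal{S}\cup S_a)$ then finishes this direction. Membership is immediate: each $S\in\mathcal{S}$ equals ${S\choose 0}_0$ with $S\in T(\mathcal{S})$, and $S_a={0\choose k}_0$ with $0\in T(\mathcal{S})$. For the three closure conditions I use that the $A$-component of a quotient (resp.\ of a direct sum, resp.\ of the middle term of an extension) of objects of $\mathcal{C}$ is again a quotient (resp.\ a direct sum, resp.\ an extension) of objects of $T(\mathcal{S})$, by coordinatewise exactness; since $T(\mathcal{S})$ is a torsion class of $\mod A$ it is closed under each of these, while the $k$-component is an arbitrary finite-dimensional space and so imposes no constraint. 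Hence $\mathcal{C}$ is closed under images, direct sums and extensions.

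The reverse inclusion $\mathcal{C}\subseteq T(\mathcal{S}\cup S_a)$ carries the real content, because an object ${M\choose k^n}_f$ comes equipped with the nontrivial gluing datum $f$. I would strip this off using the short exact sequence
$$0\to {M\choose 0}_0\to {M\choose k^n}_f\to {0\choose k^n}_0\to 0,$$
whose left-hand map is $(\mathrm{id}_M,0)$ and right-hand map is $(0,\mathrm{id}_{k^n})$; it is exact by the coordinatewise criterion. Since ${0\choose k^n}_0\cong S_a^{\,n}\in T(\mathcal{S}\cup S_a)$ and torsion classes are closed under extensions, it then suffices to prove that ${M\choose 0}_0\in T(\mathcal{S}\cup S_a)$ whenever $M\in T(\mathcal{S})$. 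For this I consider $\mathcal{D}:=\{M\in\mod A\mid {M\choose 0}_0\in T(\mathcal{S}\cup S_a)\}$ and show it is a torsion class of $\mod A$ containing $\mathcal{S}$, so that $T(\mathcal{S})\subseteq\mathcal{D}$ by minimality. The key point is that $M\mapsto {M\choose 0}_0$ is a full exact embedding $\mod A\hookrightarrow\mod B$ (morphisms ${M\choose 0}_0\to{M'\choose 0}_0$ are exactly the $A$-morphisms $M\to M'$, and exactness is coordinatewise), so it carries images, direct sums and extensions in $\mod A$ to the corresponding operations in $\mod B$; closure of $\mathcal{D}$ under each then follows from the closure of $T(\mathcal{S}\cup S_a)$ under the same operation, and $\mathcal{S}\subseteq\mathcal{D}$ is clear since ${S\choose 0}_0=S\in\mathcal{S}\cup S_a$.

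The main obstacle is exactly this second inclusion: one must ensure that the gluing map $f$ never forces ${M\choose k^n}_f$ out of the torsion class. The displayed short exact sequence is what renders $f$ irrelevant, by realizing ${M\choose k^n}_f$ as an extension of $S_a^{\,n}$ by the honest $A$-module $M$, and the embedding argument via $\mathcal{D}$ is what propagates membership from the generators of $T(\mathcal{S})$ to all of it. Everything else reduces, through the coordinatewise descriptions of morphisms and exact sequences, to the fact that $T(\mathcal{S})$ and $T(\mathcal{S}\cup S_a)$ are torsion classes together with the minimality of the smallest torsion class containing a prescribed set.
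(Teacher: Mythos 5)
Your proof is correct and takes essentially the same approach as the paper's: both verify that the right-hand side is a torsion class containing $\mathcal{S}\cup S_a$ (yielding one inclusion by minimality of $T(\mathcal{S}\cup S_a)$) and use the short exact sequence $0\to {M\choose 0}_0\to {M\choose k^n}_f\to {0\choose k^n}_0\to 0$ to absorb the gluing datum $f$ for the other inclusion. The only difference is that you carefully justify, via the preimage torsion class $\mathcal{D}$ and the exact embedding $M\mapsto {M\choose 0}_0$, the step that ${M\choose 0}_0\in T(\mathcal{S}\cup S_a)$ for every $M\in T(\mathcal{S})$, which the paper asserts without an explicit argument.
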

\begin{proof} Since $\mathcal{S}$ and $S_a$ belong to $T(\mathcal{S}\cup S_a)$, we have $\{{M\choose 0}_0\mid M\in T(\mathcal{S})\}\subset T(\mathcal{S}\cup S_a)$ and  ${0\choose k^n}\in T(\mathcal{S}\cup S_a)$  for all $n\in \mathbb{N}$.
Note that $ \forall   n\in \mathbb{N}, ~ M\in T(\mathcal{S})~\text{ and }~f\in \Hom_A(X\otimes_k k^n, M)$, 
 there exists the following exact sequence in $\mod B$
$$0\to {M\choose 0}_0\to {M\choose k^n}_f\to{0\choose k^n}_0\to 0.$$ 
this implies  ${M\choose k^n}_f\in T(\mathcal{S}\cup S_a)$.
It is clear that $\{{M\choose k^n}_f \mid \forall   n\in \mathbb{N}, ~ M\in T(\mathcal{S})~\text{ and }~f\in \Hom_A(X\otimes_k k^n, M)\}$  is closed under image, direct sum and extension. Thus it
is a torsion class. Hence $T(\mathcal{S}\cup S_a)=\{{M\choose k^n}_f \mid \forall   n\in \mathbb{N}, ~ M\in T(\mathcal{S})~\text{ and }~f\in \Hom_A(X\otimes_k k^n, M)\}.$

\end{proof}

\begin{proposition}\label{3.4}
Let  $\mathcal{S}$ be a semibrick in $\mod A$. If  $\mathcal{S}$ is left finite, then   $\mathcal{S}\cup S_a$ is also.
\end{proposition}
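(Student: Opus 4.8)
The plan is to translate the statement into the language of torsion classes. Saying $\mathcal{S}$ is left finite means $T(\mathcal{S})$ is functorially finite in $\mod A$, and saying $\mathcal{S}\cup S_a$ is left finite means $T(\mathcal{S}\cup S_a)$ is functorially finite in $\mod B$, so I want to upgrade functorial finiteness of $T(\mathcal{S})$ to that of $T(\mathcal{S}\cup S_a)$, using the explicit description of the latter from Lemma \ref{3.3}. My first move is to observe that one half of functorial finiteness is automatic: any torsion class $\mathcal{T}$ in $\mod B$ is contravariantly finite, because for every $N\in\mod B$ the inclusion $t(N)\hookrightarrow N$ of the largest torsion submodule is a right $\mathcal{T}$-approximation — the image of any map $T\to N$ with $T\in\mathcal{T}$ lies in $\mathcal{T}$ and hence inside $t(N)$. (Here $t(N)$ exists because, by the definition of torsion class in Section \ref{sect 2}, $\mathcal{T}$ is closed under direct sums and images, so the sum of all torsion submodules of the finite-length module $N$ is again torsion.) It therefore suffices to prove that $T(\mathcal{S}\cup S_a)$ is covariantly finite.

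For covariant finiteness I would build left approximations by hand, pushing forward a left $T(\mathcal{S})$-approximation from $\mod A$. Given $N={N_0\choose k^m}_g\in\mod B$, I choose a left $T(\mathcal{S})$-approximation $\phi\colon N_0\to T^0$ in $\mod A$ (available since $T(\mathcal{S})$ is covariantly finite), with $T^0\in T(\mathcal{S})$. I then set $T_N={T^0\choose k^m}_{\phi g}$, which lies in $T(\mathcal{S}\cup S_a)$ by Lemma \ref{3.3}, and propose the morphism ${\phi\choose 1}\colon N\to T_N$; its validity is immediate since $\phi\circ g=(\phi g)\circ(X\otimes_k 1)$. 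To see this is a left $T(\mathcal{S}\cup S_a)$-approximation, I take any ${\alpha\choose\beta}\colon N\to{M\choose k^n}_f$ with $M\in T(\mathcal{S})$; the defining property of $\phi$ yields $\alpha'\colon T^0\to M$ with $\alpha'\phi=\alpha$, and I claim ${\alpha'\choose\beta}$ is the required factorization through ${\phi\choose 1}$.

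The one genuine verification — where the content of the proof lies — is that this $\alpha'$ is compatible with the $B$-module structure, i.e. that ${\alpha'\choose\beta}$ is an honest $B$-morphism. This amounts to the identity $\alpha'\circ(\phi g)=f\circ(X\otimes_k\beta)$, which follows by rewriting the left side as $(\alpha'\phi)g=\alpha g$ and invoking the commuting square $\alpha g=f(X\otimes_k\beta)$ of the original map ${\alpha\choose\beta}$; the composite ${\alpha'\choose\beta}{\phi\choose 1}={\alpha\choose\beta}$ is then clear. The main conceptual obstacle to keep in mind is that $X$ need not be projective, which is exactly why one cannot symmetrically lift the structure map $g$ along a right approximation, so a naive dual construction of right approximations breaks down. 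Reducing to covariant finiteness sidesteps this, because the structure map of the approximating module is obtained simply by post-composing to $\phi g$, which requires no lifting. Combining covariant finiteness with the automatic contravariant finiteness shows that $T(\mathcal{S}\cup S_a)$ is functorially finite, that is, $\mathcal{S}\cup S_a$ is left finite.
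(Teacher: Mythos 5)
Your proof is correct, and the half that carries the real content --- covariant finiteness --- is exactly the paper's argument: given ${M\choose k^n}_f$, push a left $T(\mathcal{S})$-approximation $h_M\colon M\to Z_M$ forward to ${h_M\choose 1}\colon{M\choose k^n}_f\to{Z_M\choose k^n}_{h_M\circ f}$, factor an arbitrary map ${a\choose b}$ into $T(\mathcal{S}\cup S_a)$ by factoring $a$ through $h_M$, and check that the resulting pair is a genuine $B$-morphism via $h'\circ(h_M\circ f)=a\circ f=f_1\circ(X\otimes b)$; this is word for word your verification. The genuine difference is in the contravariant half. The paper dismisses it with the single sentence that it is ``dual'', which is not innocent: the literal dual construction would require lifting the structure map $f\colon X\otimes_k k^n\to M$ through a right $T(\mathcal{S})$-approximation $Z\to M$, and such a lift need not exist, since $\Im f$ need not lie in $T(\mathcal{S})$ and $X$ need not be projective --- precisely the obstruction you flag. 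Your substitute is the standard and airtight one: every torsion class $\mathcal{T}$ in the module category of a finite dimensional algebra is automatically contravariantly finite, because the torsion submodule $t(N)\hookrightarrow N$ (which exists, as the sum of all submodules of $N$ lying in $\mathcal{T}$ is an image of their direct sum, hence lies in $\mathcal{T}$) is a right $\mathcal{T}$-approximation. So your route coincides with the paper's where the work happens, and is more careful where the paper waves its hands; what it buys is a complete argument that does not lean on a duality which, read literally, breaks down.
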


\begin{proof}
We only show that $T(\mathcal{S}\cup S_a)$ is covariantly~ finite. It is dually to prove $T(\mathcal{S}\cup S_a)$ is contravariantly finite.

Let ${M\choose k^n}_f\in\mod B$. Then  $M$ has a left $T(\mathcal{S})$-approximation $h_M:M\to Z_M$ in $\mod A$ since $T(\mathcal{S})$ is covariantly finite.    Take $g=~h_M\circ f$. The following commutative diagram
$$\xymatrix@R=10pt{X\otimes_k k^n\ar^{~~f}[r]\ar@{=}[d]&M\ar^{h_M}[d]\\
X\otimes_k k^n\ar^{~~~g}[r]&Z_M
}$$
implies that   ${f_M\choose 1}$ is a morphism from ${M\choose k^n}_f$ to ${Z_M\choose k^n}_g$.
 Next, we will show that ${f_M\choose 1}$ is  left $T(\mathcal{S}\cup S_a)$-approximation of ${M\choose k^{n}}_{f}$. For any ${M_1\choose k^{n_1}}_{f_1}\in T(\mathcal{S}\cup S_a)$ and morphism ${a\choose b}:{M\choose k^n}_f\to {M_1\choose k^{n_1}}_{f_1}$, there  is  a morphism $h':Z_M\to  M_1$ such that $a=h'\circ h_M$ since $h_M$ is a  left approximation.  Note that there exists a commutative diagram
 $$\xymatrix@R=10pt{X\otimes_k k^n\ar^{~~f}[r]\ar_{X\otimes b}[d]&M\ar^{a}[d]\\
X\otimes_k k^{n_1}\ar^{~~~f_1}[r]&M_1
}$$
 
that is $a\circ f=f_1\circ (X\otimes b)$. Therefore, $$f_1\circ (X\otimes b)=a\circ f=h'\circ h_M \circ f=h'\circ g,$$
that is, the following diagram
 $$\xymatrix@R=10pt{X\otimes_k k^n\ar^{~~g}[r]\ar_{X\otimes b}[d]&Z_M\ar^{h'}[d]\\
X\otimes_k k^{n_1}\ar^{~~~f_1}[r]&M_1
}$$
commutates.
Hence,    ${h'\choose b}$ is a morphism from ${ Z_M\choose k^n}_g$ to ${M_1\choose k^{n_1}}_{f_1}$, and the following equation holds
$${h'\choose b}\circ{h_M\choose 1}={h'\circ h_M\choose b}={g\choose b}.$$
By Lemma \ref{3.3},  ${Z_M\choose k^n}_g\in T(\mathcal{S}\cup S_a)$ since $Z_M\in T(\mathcal{S})$. Thus, we were done.
\end{proof}

The following result can be found  immediately.
\begin{corollary}
Let  $\mathcal{S}$ be a semibrick in $\mod A$. If  $\mathcal{S}$ is sincere left finite, then   $\mathcal{S}\cup S_a$ is also.
\end{corollary}

Let $\mathcal{F}$ be a full subcategory of $\mod A$. An $A$-module $M$ is called ${\bf Ext\text{-}projective}$ in $\mathcal{F}$  if $\Ext^1_A(M,F)=0$ for all $F\in \mathcal{F}$.  If $\mathcal{F}$ is  functorially finite in $\mod A$ , then there are only  finitely many indecomposable $\Ext$-projective modules in $\mathcal{F}$  up to isomorphism. In this case, we will denote by $P(\mathcal{F})$ the direct sum of all Ext-projective modules in $\mathcal{F}$ up to isomorphism.

\begin{definition}  We say that an $A$-module $M$ is a support $\tau$-tilting module with respect to the semibrick $\mathcal{S}$ if  $\Phi(M)$=$\mathcal{S}$.\end{definition}

Now, we can construct support $\tau$-tilting $B$-modules from support $\tau$-tilting $A$-modules.

\begin{proposition}\label{1}
Let $M\in\mod A$ be a support $\tau$-tilting module with respect to the semibrick $\mathcal{S}$. Then both $P(T(\mathcal{S}))$ and $P(T(\mathcal{S}\cup S_a))$ are support $\tau$-tilting $B$-modules. Moreover, if $M$ is $\tau$-tilting, then $P(T(\mathcal{S}\cup S_a))$  is also.
\end{proposition}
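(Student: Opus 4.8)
The plan is to deduce the whole statement from Asai's bijection (Theorem~\ref{2.3}) applied to $B$, together with the observation that the inverse of $\Phi$ carries a left finite semibrick $\mathcal{S}'$ of $\mod B$ to $P(T(\mathcal{S}'))$, the $\Ext$-projective module of the functorially finite torsion class $T(\mathcal{S}')$. Granting this identification, the proposition reduces to checking that $\mathcal{S}$ and $\mathcal{S}\cup S_a$ are left finite semibricks in $\mod B$: once that is known, $P(T(\mathcal{S}))=\Phi^{-1}(\mathcal{S})$ and $P(T(\mathcal{S}\cup S_a))=\Phi^{-1}(\mathcal{S}\cup S_a)$ are support $\tau$-tilting $B$-modules by Theorem~\ref{2.3} over $B$.

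First I would note that $\mathcal{S}=\Phi(M)$ is left finite in $\mod A$, which is immediate from Theorem~\ref{2.3} over $A$ since $M$ is support $\tau$-tilting. By Proposition~\ref{3.2}, both $\mathcal{S}$ and $\mathcal{S}\cup S_a$ are semibricks in $\mod B$, and the left finiteness of $\mathcal{S}\cup S_a$ in $\mod B$ is exactly Proposition~\ref{3.4}. To handle $\mathcal{S}$ itself, I would first verify that the torsion class it generates inside $\mod B$ consists only of $A$-modules, hence coincides with $T(\mathcal{S})\subseteq\mod A$: the class $\{{N\choose 0}_0\mid N\in T(\mathcal{S})\}$ contains $\mathcal{S}$ and is closed under images, direct sums and extensions in $\mod B$, the last point because an extension of two $A$-modules in $\mod B$ is again an $A$-module by the description of short exact sequences in $\mod B$.

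Next I would show this torsion class is functorially finite in $\mod B$ by an argument parallel to but lighter than Proposition~\ref{3.4}. For the covariant side one has $\Hom_B({N\choose k^n}_f,{W\choose 0}_0)\cong\Hom_A(N/\Im f,W)$ for $W\in T(\mathcal{S})$; hence, writing $\pi\colon N\to N/\Im f$ for the projection and taking a left $T(\mathcal{S})$-approximation $h\colon N/\Im f\to Z$ in $\mod A$, the map ${h\pi\choose 0}\colon{N\choose k^n}_f\to{Z\choose 0}_0$ is well defined (as $\pi f=0$) and is a left $T(\mathcal{S})$-approximation in $\mod B$. The contravariant side is even simpler, since $\Hom_B({W\choose 0}_0,{N\choose k^n}_f)\cong\Hom_A(W,N)$, so a right $T(\mathcal{S})$-approximation of $N$ in $\mod A$ induces one of ${N\choose k^n}_f$ in $\mod B$. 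This proves $\mathcal{S}$ is left finite in $\mod B$ and, with the reduction above, that both $P(T(\mathcal{S}))$ and $P(T(\mathcal{S}\cup S_a))$ are support $\tau$-tilting $B$-modules.

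For the final clause, if $M$ is $\tau$-tilting then $\mathcal{S}=\Phi(M)$ is a sincere left finite semibrick by Corollary~\ref{2.4}; the corollary following Proposition~\ref{3.4} then gives that $\mathcal{S}\cup S_a$ is sincere left finite in $\mod B$, so by Corollary~\ref{2.4} applied to $B$ its image $P(T(\mathcal{S}\cup S_a))=\Phi^{-1}(\mathcal{S}\cup S_a)$ is $\tau$-tilting. I expect the main obstacle to be the left finiteness of $\mathcal{S}$ in $\mod B$, which Proposition~\ref{3.4} does not cover; the crux is identifying the $B$-torsion class generated by $\mathcal{S}$ with $T(\mathcal{S})\subseteq\mod A$ and the accompanying $\Hom$-computations that transport approximations from $\mod A$ to $\mod B$. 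A secondary point to pin down is the identification $\Phi^{-1}(\mathcal{S}')=P(T(\mathcal{S}'))$, which follows from the standard facts $T(\Phi(M))=\Fac M$ and $P(\Fac M)=M$.
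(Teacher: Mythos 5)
Your proof is correct, and its skeleton matches the paper's: reduce the statement to the left finiteness of $\mathcal{S}$ and of $\mathcal{S}\cup S_a$ as semibricks in $\mod B$, then convert the resulting functorially finite torsion classes into support $\tau$-tilting $B$-modules. The differences are worth recording, because your write-up is substantially more complete than the paper's own proof. Where the paper simply asserts ``Hence, $\mathcal{S}$ is also a left finite semibrick of $\mod B$'' (its Proposition \ref{3.4} covers only $\mathcal{S}\cup S_a$), you supply the missing argument: you identify the smallest torsion class of $\mod B$ containing $\mathcal{S}$ with $\{{N\choose 0}_0 \mid N\in T(\mathcal{S})\}$ and transport approximations using the isomorphisms $\Hom_B({N\choose k^n}_f,{W\choose 0}_0)\cong\Hom_A(N/\Im f,W)$ and $\Hom_B({W\choose 0}_0,{N\choose k^n}_f)\cong\Hom_A(W,N)$; both isomorphisms and both approximation constructions check out. (One small point you leave implicit: your closure argument only gives that the smallest $B$-torsion class containing $\mathcal{S}$ is contained in $\{{N\choose 0}_0\mid N\in T(\mathcal{S})\}$; the reverse inclusion holds because the intersection of that $B$-torsion class with $\mod A$ is a torsion class of $\mod A$ containing $\mathcal{S}$, exact sequences of $A$-modules remaining exact in $\mod B$.) You also actually prove the ``Moreover'' clause, via Corollary \ref{2.4} on both sides together with the sincere corollary following Proposition \ref{3.4}, whereas the paper's proof never addresses it. The only genuine divergence in machinery is the final conversion step: the paper cites \cite[Theorem 2.7]{AIR} directly (a functorially finite torsion class $\mathcal{T}$ yields the support $\tau$-tilting module $P(\mathcal{T})$), while you route through Asai's bijection over $B$, which costs you the extra identification $\Phi^{-1}(\mathcal{S}')=P(T(\mathcal{S}'))$; your reduction of that identification to $T(\Phi(N))=\Fac N$ and $P(\Fac N)=N$ is correct, though invoking the AIR correspondence would have been a shade more direct.
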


\begin{proof} Since $M$ is a  support $\tau$-tilting module,  we have $\mathcal{S}$ is a left finite semibrick of $\mod A$ by Theorem \ref{2.3}. Hence,  $\mathcal{S}$ is also a left finite semibrick of $\mod B$. Moreover,  we have $\mathcal{S}\cup S_a$  is  a left finite semibrick of $\mod B$ by Proposition \ref{3.4}. Therefore, $T(\mathcal{S})$ and $T(\mathcal{S}\cup S_a)$ are   functorially~finite torsion classes. Hence,   $P(T(\mathcal{S}))$  and $P(T(\mathcal{S}\cup S_a))$ are support $\tau$-tilting $B$-module by \cite[Theorem 2.7]{AIR}).
\end{proof}
As a consequence,  we have the following  inequality
\begin{corollary}
$|\stilt B|\geqslant 2|\stilt A|.$\end{corollary}

Applying Proposition \ref{1}, we can give those forms of support $\tau$-tilting $B$-module under certain conditions.

\begin{theorem}\label{3.9} Let $B$ be  the one-point extension of $A$ by $X$ and  $M$ be a support $\tau$-tilting module  in $\mod A$. Then
\begin{enumerate}
\item [(1)] $M$ is a support $\tau$-tilting $B$-module. 
\item [(2)] Assume that  $M\in\mod A$ is  a support $\tau$-tilting module with respect to the semibrick $\mathcal{S}$, then $P(T(\mathcal{S}\cup S_a))$ has $M$ as direct summand.
\item[(3)]  If $X\in \Fac M$, then  $P_a\oplus M$ is a support $\tau$-tilting $B$-module.
\item[(4)]  If $\Hom_A(X, \Fac M)=0$, then  $S_a\oplus M$ is a support $\tau$-tilting $B$-module.
\end{enumerate}
\end{theorem}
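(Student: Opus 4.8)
The plan is to prove each of the four parts by reducing them to the machinery already developed, principally Proposition \ref{1} and the explicit description of $T(\mathcal{S}\cup S_a)$ in Lemma \ref{3.3}. The central observation I would exploit is that a support $\tau$-tilting $A$-module $M$ corresponds under $\Phi$ to a left finite semibrick $\mathcal{S}$, and by Proposition \ref{3.2} and Proposition \ref{3.4} the set $\mathcal{S}\cup S_a$ is again a left finite semibrick over $B$; hence $P(T(\mathcal{S}\cup S_a))$ is a genuine support $\tau$-tilting $B$-module. The whole theorem is essentially about identifying what $P(T(\mathcal{S}\cup S_a))$ actually \emph{is} as a $B$-module under various hypotheses on $X$.

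For part (1), I would argue that $M$, viewed as a $B$-module via the embedding $M\cong{M\choose 0}_0$, is $\tau$-rigid over $B$: by Lemma \ref{b} it suffices to check $\Ext^1_B(M,\Fac_B M)=0$, and since every factor module of a sum of copies of ${M\choose 0}_0$ again has zero second coordinate, $\Fac_B M$ consists of modules of the form ${N\choose 0}_0$ with $N\in\Fac_A M$. The exact-sequence criterion for $\mod B$ given in the excerpt then reduces a $B$-extension of such modules to an $A$-extension, so $\Ext^1_B(M,\Fac_B M)=0$ follows from $\tau$-rigidity of $M$ over $A$. To promote $\tau$-rigidity to \emph{support} $\tau$-tilting I would exhibit the missing projective summand: one checks that $(M,\,P)$ with an appropriate projective, together with $S_a$ playing the role of the extra simple at the new vertex, satisfies the rank count $|M|+|P|=|B|$. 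In fact the cleanest route is to observe that $\mathcal{S}$ is left finite over $B$ by Proposition \ref{3.2}, so $P(T(\mathcal{S}))$ is support $\tau$-tilting over $B$ by Proposition \ref{1}, and then to verify that $P(T(\mathcal{S}))\cong M$ because the Ext-projectives of $T(\mathcal{S})$ computed inside $\mod B$ coincide with those computed in $\mod A$ (again using that $T(\mathcal{S})$ lives entirely in the zero-second-coordinate subcategory).

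For part (2), since $M=P(T(\mathcal{S}))$ is Ext-projective in $T(\mathcal{S})\subseteq T(\mathcal{S}\cup S_a)$, I would show each indecomposable summand $M_i$ of $M$ remains Ext-projective in the larger class $T(\mathcal{S}\cup S_a)$. By Lemma \ref{3.3} an arbitrary object of $T(\mathcal{S}\cup S_a)$ has the form ${N\choose k^n}_f$ with $N\in T(\mathcal{S})$, and using the exact-sequence description one reduces $\Ext^1_B({M_i\choose 0}_0,{N\choose k^n}_f)$ to $\Ext^1_A(M_i,N)=0$ together with a vanishing coming from the second coordinate; hence $M$ is a direct summand of $P(T(\mathcal{S}\cup S_a))$. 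Parts (3) and (4) then amount to pinning down the \emph{remaining} Ext-projective beyond $M$. When $X\in\Fac M$ I expect $P_a={X\choose k}_1$ to be Ext-projective in $T(\mathcal{S}\cup S_a)$ (intuitively $P_a$ is the projective cover of $S_a$ and $X\in\Fac M\subseteq T(\mathcal{S})$ places $P_a$ inside the torsion class), so $P(T(\mathcal{S}\cup S_a))=M\oplus P_a$. When $\Hom_A(X,\Fac M)=0$ the map $f$ in any ${N\choose k^n}_f$ with $N\in T(\mathcal{S})=\Fac M$ (for $\tau$-tilting $M$) must vanish, forcing $T(\mathcal{S}\cup S_a)$ to split as modules ${N\choose 0}_0\oplus S_a^{\oplus}$, so the new Ext-projective is $S_a$ itself and $P(T(\mathcal{S}\cup S_a))=M\oplus S_a$.

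The main obstacle I anticipate is part (3): verifying that $P_a$ is genuinely Ext-projective in $T(\mathcal{S}\cup S_a)$, and that no \emph{other} indecomposable (some ${X'\choose k}$ built from a different summand of $X$) sneaks in, requires a careful computation of $\Ext^1_B(P_a,{N\choose k^n}_f)$ using a projective presentation of $P_a$ over $B$ and the hypothesis $X\in\Fac M$ to kill the relevant obstruction. The rank bookkeeping — confirming that $M\oplus P_a$ and $M\oplus S_a$ each have the correct number $|B|=|A|+1$ of non-isomorphic indecomposable summands plus projective deficiency to qualify as support $\tau$-tilting — is routine once the Ext-projective computation is in hand, but it is where one must be most careful not to double-count or miss a summand.
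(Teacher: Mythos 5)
Your overall strategy is the paper's: identify $T(\mathcal{S})$ with $\Fac M$, use Lemma \ref{3.3} to show the relevant modules are Ext-projective in $T(\mathcal{S}\cup S_a)$, and close with the rank count for $\tau$-rigid pairs. Parts (1) and (2) match the paper essentially verbatim: your ``cleanest route'' for (1) is exactly the paper's appeal to Proposition \ref{1}, and your reduction of $\Ext^1_B\bigl({M_i\choose 0}_0,{N\choose k^n}_f\bigr)$ to $\Ext^1_A(M_i,N)=0$ plus a second-coordinate vanishing is precisely how the paper proves (2), by applying $\Hom_B(M,-)$ to $0\to {N\choose 0}_0\to {N\choose k^n}_f\to {0\choose k^n}_0\to 0$ and invoking Lemma \ref{b}. (One small slip in your first route for (1): the completion of the pair over $B$ is $(M,\,P\oplus P_a)$ — the new projective $P_a$, not $S_a$, accounts for the extra vertex in the count $|M|+|P\oplus P_a|=|B|$.)

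Two substantive remarks. In part (3) you misplace the difficulty: $P_a$ is a projective $B$-module, so $\Ext^1_B(P_a,-)$ vanishes identically and no projective presentation or ``obstruction'' computation is needed; the hypothesis $X\in\Fac M$ serves only to put $P_a={X\choose k}_1$ inside $T(\mathcal{S}\cup S_a)$ via Lemma \ref{3.3}, exactly as your own parenthetical says. Likewise you need not rule out extra Ext-projectives: the theorem does not assert $P(T(\mathcal{S}\cup S_a))=M\oplus P_a$, and the paper never computes $P(T(\mathcal{S}\cup S_a))$ in full — it only uses that $M\oplus P_a$ is a direct summand of it (hence $\tau$-rigid) and then checks that $(M\oplus P_a,P)$ is a support $\tau$-tilting pair via $|M\oplus P_a|+|P|=1+|A|=|B|$ together with $\Hom_B(P,P_a)\cong\Hom_A(P,X)=0$; this last vanishing, which uses $X\in\Fac M$ and $\Hom_A(P,\Fac M)=0$, is the one genuinely non-trivial piece of the bookkeeping you call routine (equality with $P(T(\mathcal{S}\cup S_a))$ then follows a posteriori by maximality, if you want it). In part (4) your route genuinely differs from the paper's: you observe that $\Hom_A(X,\Fac M)=0$ forces every structure map $f$ to vanish, so all objects of $T(\mathcal{S}\cup S_a)$ have the form $N\oplus S_a^n$, whereas the paper applies $\Hom_B(-,Y')$ to $0\to X\to P_a\to S_a\to 0$ to get $\Ext^1_B(S_a,\Fac M)=0$ and then uses its sequence (1.2). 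Your version works and is arguably slicker, but as stated it has a gap: the splitting of the \emph{objects} of the torsion class does not by itself give Ext-projectivity of $S_a$. You must add that any extension of $S_a$ by an object of $T(\mathcal{S}\cup S_a)$ again lies in $T(\mathcal{S}\cup S_a)$ (torsion classes are extension-closed), hence also has zero structure map, and therefore splits; this gives $\Ext^1_B(S_a,T(\mathcal{S}\cup S_a))=0$. With that sentence, plus $\Hom_B(P,S_a)=0$ (Lemma \ref{3.1}) for the pair count, your part (4) is complete.
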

\begin{proof}
Assume that  $M\in\mod A$ is  a support $\tau$-tilting module with respect to the semibrick $\mathcal{S}$, then $T(\mathcal{S})=\Fac M$\cite[Lemma 2.5(5)]{Asai2018}. 

Note that $ \forall   n\in \mathbb{N}, ~ M'\in T(\mathcal{S})~\text{ and }~f\in \Hom_A(X\otimes_k k^n, M')$, 
 there exists the following exact sequence in $\mod B$
$$0\to {M'\choose 0}_0\to {M'\choose k^n}_f\to{0\choose k^n}_0\to 0\eqno{(1)}.$$ 

For any $Y\in \mod B$, applying the functor $\Hom_B(Y,-)$ to (1.1), we have the following exact sequence

$$ \Ext^1_B(Y, {M'\choose 0}_0)\to \Ext^1_B(Y, {M'\choose k^n}_f)\to\Ext^1_B(Y,{0\choose k^n}_0)= 0\eqno{(1.2)}.$$

(1) By  Proposition \ref{1},  $P(T(\mathcal{S}))=P(\Fac M)=M$ is a support $\tau$-tilting $B$-module.

(2) Putting $Y=$~$ _BM\cong {M\choose 0}_0$ in (1.2), we have $\Ext^1_B(M, {M'\choose 0}_0)\cong\Ext^1_A(M, M')=0$ by Lemma \ref{b} , and hence $\Ext^1_B(M, {M'\choose k^n}_f)=0$. By Lemma \ref{3.3},  $M$ is a $\Ext$-projective object in $T(\mathcal{S}\cup S_a)$. Therefore,   $P(T(\mathcal{S}\cup S_a))$ has $M$ as direct summand.

(3) If $X\in \Fac M$, then $P_a\in T(\mathcal{S}\cup S_a)$ by Lemma \ref{3.3}. Hence,  $P_a\oplus M$ is a direct summand of $P(T(\mathcal{S}\cup S_a))$ by (2).
In particular,  $P_a\oplus M$  is a $\tau$-rigid $B$-module.
 Suppose that $(M,P)$ is a support $\tau$-tilting pair in $\mod A$. Hence, $\Hom_A(P,\Fac M)=0$ because $\Hom_A(P, M)=0$. This implies
$\Hom_B(P, P_a)\cong \Hom_A(P, X)=0$. Therefore, $(P_a\oplus M,P)$ is a support $\tau$-tilting pair in $\mod B$ since $|P_a\oplus M|+|P|=1+|A|=|B|$.

(4)Note that there is an exact  sequence in $\mod B$,
$$0\to{X\choose 0}\cong X \stackrel{f}\to P_a\to S_a\to 0.$$
For any $Y'\in \Fac M$, applying $\Hom_B(-,Y')$ to it, we have the following exact sequence,
$$\Hom_B(X, Y')\to\Ext^1_B(S_a,Y') \to \Ext^1_B(P_a,Y')=0.$$
Since $\Hom_A(X, \Fac M)=0$, we have  $\Hom_B(X, Y')=0$. Hence,  $\Ext^1_B(S_a,Y')=0$. Thus,
$\Ext^1_B(S_a,\Fac M)=0$.  Putting $Y=S_a$ in (1.2), we have  $\Ext^1_B(S_a, {M'\choose k^n}_f)=0$. By Lemma \ref{3.3},  $S_a$ is a $\Ext$-projective object in $T(\mathcal{S}\cup S_a)$. Therefore,   $P(T(\mathcal{S}\cup S_a))$ has $S_a\oplus M$ as direct summand. This implies   $S_a\oplus M$ is a $\tau$-rigid $B$-module. Suppose that $(M,P)$ is a support $\tau$-tilting pair in $\mod A$. It is easy to get  $(S_a\oplus M,P)$ is a support $\tau$-tilting pair in $\mod B$ since $\Hom_B(P, S_a)=0$ and  $|S_a\oplus M|+|P|=1+|A|=|B|$. Hence, $S_a\oplus M$ is a support $\tau$-tilting $B$-module. 

\end{proof}

\begin{corollary}\label{3.10} Let $B$ be  the one-point extension of $A$ by $X$ and  $M\in\mod A$ be a  $\tau$-tilting module. Then
\begin{enumerate}
\item[(2)]  If $X\in \Fac M$, then  $P_a\oplus M$ is a $\tau$-tilting $B$-module. 
\item[(3)]  If $\Hom_A(X, \Fac M)=0$, then  $S_a\oplus M$ is a  $\tau$-tilting $B$-module. 
\end{enumerate}
\end{corollary}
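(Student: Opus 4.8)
The plan is to derive the corollary directly from Theorem~\ref{3.9} by observing that a $\tau$-tilting module is exactly a \emph{sincere} support $\tau$-tilting module, so the only extra work beyond parts (3) and (4) of the theorem is to verify sincerity of the extended module over $B$. Recall that $M\in\mod A$ is $\tau$-tilting if and only if $(M,0)$ is a support $\tau$-tilting pair, equivalently $M$ is a sincere support $\tau$-tilting $A$-module. In particular the projective part $P$ appearing in the support $\tau$-tilting pair is zero, which will simplify the bookkeeping: in both cases the candidate $B$-module has $|P_a\oplus M| = 1+|A| = |B|$ (respectively $|S_a\oplus M| = |B|$) summands and no projective to discard.

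First I would invoke Theorem~\ref{3.9}(3) and (4) to conclude that, under the respective hypotheses $X\in\Fac M$ and $\Hom_A(X,\Fac M)=0$, the modules $P_a\oplus M$ and $S_a\oplus M$ are support $\tau$-tilting $B$-modules. It then remains to upgrade ``support $\tau$-tilting'' to ``$\tau$-tilting,'' and by the characterization just recalled this amounts to checking that each of these $B$-modules is sincere. Concretely, the simple $B$-modules are the simple $A$-modules (viewed over $B$) together with $S_a$; since $M$ is already sincere as an $A$-module, every simple $A$-module appears as a composition factor of $M$, and the only remaining simple to account for is $S_a$.

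For part (3), sincerity of $P_a\oplus M$ follows because $P_a = {X\choose k}_1$ has $S_a$ as a composition factor (it sits on top of the $A$-module $X$), so together with the sincere $A$-module $M$ all simple $B$-modules occur. For part (4), the summand $S_a$ itself supplies the missing composition factor, and again $M$ covers all the simple $A$-modules, so $S_a\oplus M$ is sincere. In either case a sincere support $\tau$-tilting $B$-module is $\tau$-tilting, which is the desired conclusion. Alternatively, one can argue purely numerically: in the support $\tau$-tilting pair for $M$ over $A$ the projective part is $0$, and the computations inside the proof of Theorem~\ref{3.9} show the corresponding projective part over $B$ is again $0$ (no $\Hom_A(P,-)$ obstruction to kill since $P=0$), so the pair is $(P_a\oplus M,0)$, respectively $(S_a\oplus M,0)$, which by Definition~\ref{a} and \cite[Proposition~2.3]{AIR} means exactly that the module is $\tau$-tilting.

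The main obstacle, such as it is, lies in the sincerity verification rather than in any $\tau$-rigidity estimate---the latter is already handed to us by Theorem~\ref{3.9}. One must be slightly careful that the translation ``sincere support $\tau$-tilting $=$ $\tau$-tilting'' is applied over the correct algebra $B$ and that the composition factor $S_a$ is genuinely present; the subtlety is that in part (4) the module $X$ plays no role in producing $S_a$ (indeed $\Hom_A(X,\Fac M)=0$ pushes $X$ away from the torsion class), so the sincerity must come from the explicit summand $S_a$ rather than from $X$. Once this is observed, the corollary is immediate.
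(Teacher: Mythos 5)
Your proof is correct, and its main route differs from the paper's implicit one. The paper states this corollary without proof; the intended deduction is essentially what you offer as your ``alternative'' numerical argument: inside the proof of Theorem \ref{3.9}(3) and (4), the support $\tau$-tilting pair constructed over $B$ is $(P_a\oplus M,P)$, respectively $(S_a\oplus M,P)$, where $(M,P)$ is a support $\tau$-tilting pair over $A$; when $M$ is $\tau$-tilting we have $P=0$, and a module $N$ for which $(N,0)$ is a support $\tau$-tilting pair is precisely a $\tau$-tilting module. (One can even avoid looking inside that proof: Theorem \ref{3.9} already gives that $P_a\oplus M$, resp.\ $S_a\oplus M$, is support $\tau$-tilting, hence $\tau$-rigid over $B$, and $|P_a\oplus M|=|M|+1=|A|+1=|B|$ because $P_a$ is not an $A$-module, so Definition \ref{2.1}(2) applies directly; similarly for $S_a\oplus M$.) Your primary route is genuinely different: you keep Theorem \ref{3.9} as a black box and upgrade ``support $\tau$-tilting'' to ``$\tau$-tilting'' by verifying sincerity over $B$, invoking the characterization recalled in Section \ref{sect 2} that $\tau$-tilting modules are exactly the sincere support $\tau$-tilting ones. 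Your sincerity checks are sound: the simple $B$-modules are the simple $A$-modules together with $S_a$; the sincere $A$-module $M$ supplies all of the former (its composition factors over $A$ remain composition factors over $B$), and $S_a$ occurs as the top of $P_a$ via the exact sequence $0\to X\to P_a\to S_a\to 0$ in the first case, respectively as the explicit summand in the second. As for what each approach buys: your sincerity argument needs only the statements of Theorem \ref{3.9} and the sincerity characterization, so it is robust to how the theorem was proved, while the pair-counting argument is shorter and falls out of the paper's own machinery of support $\tau$-tilting pairs with no extra verification.
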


\begin{example}\label{4.3}
{\rm Let $A$ be a finite dimensional $k$-algebra given by the quiver
$$2 {\longrightarrow} 3.$$ Considering the one-point extension of $A$ by the simple module corresponding to the point $2$,
the algebra $B=A[2]$ is given by the quiver
$$1 \stackrel{\alpha}{\longrightarrow} 2 \stackrel{\beta}{\longrightarrow} 3$$ with the relation  $\alpha\beta=0$.
 The  Hasse quiver of $A$ is as follows (semibricks be remarked by red).
\[\xymatrix{{\smallmatrix Q(\stilt A):
\endsmallmatrix} &T_1={\smallmatrix \color{red}{2}\\3
\endsmallmatrix}{\smallmatrix \color{red}{3}
\endsmallmatrix}\ar[r]\ar[d] &T_2={\smallmatrix  \color{red}{3}
\endsmallmatrix}\ar[r]&T_3={\smallmatrix  \color{red}{0}
\endsmallmatrix}\\
 &T_4={\smallmatrix  \color{red}{2}\\ \color{red}{3}
\endsmallmatrix}{\smallmatrix 2
\endsmallmatrix}\ar[r]&T_5={\smallmatrix  \color{red}{2}
\endsmallmatrix}.\ar[ru]&
}\]

(1) All support $\tau$-tilting $A$-modules $T_i(i=1,2,3,4,5)$ are   support $\tau$-tilting $B$-modules by Theorem   \ref{3.9}(1).

(2) Since $2\in\Fac T_i(i=1,4,5)$, we have three support $\tau$-tilting $B$-modules 
$P_1\oplus T_1$, $P_1\oplus T_4$,$P_1\oplus T_5$ by Theorem   \ref{3.9}(3) . Moreover, $P_1\oplus T_1$, $P_1\oplus T_4$  are $\tau$-tilting $B$-modules since $T_1$, $T_4$ are $\tau$-tilting $A$-modules by Corollary \ref{3.10}. 

(3) Since $\Hom_A(2,\Fac T_i)=0 (i=2,3)$, we have  two  support $\tau$-tilting $B$-modules 
$S_1\oplus T_2$, $S_1\oplus T_3$ by Theorem   \ref{3.9}(4) .

In fact, the  Hasse quiver $Q(\stilt B)$ is as follows.
\[\xymatrix@C=30pt{ {\smallmatrix Q(\stilt B):
\endsmallmatrix} &{\smallmatrix \color{red}{1}\\ 2
\endsmallmatrix}{\smallmatrix \color{red}{2}\\ \color{red}{3}
\endsmallmatrix}{\smallmatrix 2
\endsmallmatrix}\ar[r]\ar[rrd]&{\smallmatrix \color{red}{1}\\2
\endsmallmatrix}{\smallmatrix \color{red}{2}
\endsmallmatrix}\ar[r]\ar [rrd]&{\smallmatrix \color{red}{1}\\\color{red}{2}
\endsmallmatrix}{\smallmatrix 1
\endsmallmatrix}\ar[r]&{\smallmatrix \color{red}{1}
\endsmallmatrix}\ar[rd]&\\
{\smallmatrix \color{red}{1}\\2
\endsmallmatrix}{\smallmatrix \color{red}{2}\\3
\endsmallmatrix}{\smallmatrix \color{red}{3}
\endsmallmatrix}\ar[r]\ar[ur]\ar[rd]&{\smallmatrix \color{red}{1}\\ \color{red}{2}
\endsmallmatrix}{\smallmatrix 1
\endsmallmatrix}{\smallmatrix \color{red}{3}
\endsmallmatrix}\ar[r]\ar[rru]&{\smallmatrix \color{red}{1}
\endsmallmatrix}{\smallmatrix \color{red}{3}
\endsmallmatrix}\ar[rru]\ar[rrd]&{\smallmatrix \color{red}{2}\\\color{red}{3}
\endsmallmatrix}{\smallmatrix 2
\endsmallmatrix}\ar[r]&{\smallmatrix \color{red}{2}
\endsmallmatrix}\ar[r]&{\smallmatrix \color{red}{0}
\endsmallmatrix}\\
&{\smallmatrix \color{red}{2}\\3
\endsmallmatrix}{\smallmatrix \color{red}{3}
\endsmallmatrix}\ar[rrr]\ar[rru]&&&{\smallmatrix \color{red}{3}
\endsmallmatrix}\ar[ru]&
}\]
}.
\end{example}

\end{document}